\newtheorem{theorem}{Theorem}[section]
\newtheorem{lemma}[theorem]{Lemma}
 \theoremstyle{definition}
 \newtheorem{remark}[theorem]{Remark}
\numberwithin{equation}{section}
\newcommand {\Z}{\mathbb{Z}} 
\newcommand {\Q}{\mathbb{Q}} 
\newcommand {\C}{\mathbb{C}} 
\DeclareMathOperator{\Mat}{Mat}
\begin{document}
\title[First-order model theory and Kaplansky's conjecture]{First-order model theory and Kaplansky's stable finiteness conjecture}
\author[T.Ceccherini-Silberstein]{Tullio Ceccherini-Silberstein}
\address{Universit\`a del Sannio, I-82100 Benevento, Italy}
\email{tullio.cs@sbai.uniroma1.it}
\author[M.Coornaert]{Michel Coornaert}
\address{Universit\'e de Strasbourg, CNRS, IRMA UMR 7501, F-67000 Strasbourg, France}
\email{michel.coornaert@math.unistra.fr}
\author[X.K.Phung]{Xuan Kien Phung}
\address{D\'epartement d'Informatique et de Recherche Op\'erationnelle, Universit\'e de Montr\'eal, Montr\'eal, Qu\'ebec, H3T 1J4, Canada.}
\address{D\'epartement de Math\'ematiques et de Statistique, Universit\'e de Montr\'eal, Montr\'eal, Qu\'ebec, H3T 1J4, Canada.} 
\email{phungxuankien1@gmail.com}   
\subjclass[2020]{03C98, 20E07, 37B15, 14A10, 68Q80}
\keywords{Surjunctive group, group ring, stably finite ring, model theory, Gottschalk conjecture, Kaplansky stable finiteness conjecture} 
\begin{abstract}
Using algebraic geometry methods, the third author proved that the group ring of a surjunctive group with coefficients in a field is always stably finite.
In other words, every group satisfying Gottschalk's conjecture also satisfies Kaplansky's stable finiteness conjecture.
Here we present a  proof  of this result based on  first-order model theory.
\end{abstract}
\date{\today}
\maketitle

\setcounter{tocdepth}{1}
\tableofcontents

\section{Introduction}
A group $G$ is called \emph{surjunctive} if, for any finite set $A$, every injective  $G$-equivariant map $\tau \colon A^G \to A^G$ which is  continuous with respect to the prodiscrete topology on $A^G$ is surjective
(see Section~\ref{background} for more precise definitions). 
Since every injective self-mapping  of a finite set is  surjective, it is clear that all finite groups are surjunctive. 
More generally, Gromov~\cite{gromov-esav} and Weiss~\cite{weiss-sgds} proved that every sofic group is surjunctive. 
A conjecture going back to Gottschalk~\cite{gottschalk} is that every group is surjunctive.
Although Gottschalk's conjecture is believed to be false by several experts,
no example of a non-surjunctive group, not even of a non-sofic group,  has been found up to now.
\par
A ring $R$ is said to be \emph{stably finite} if, for any integer $d \geq 1$, every one-sided invertible square matrix of order $d$ with entries in $R$ is two-sided invertible.
Kaplansky~\cite[p.~122]{Kap2}, \cite[Problem~23]{Kap1} proved, using techniques from the theory of operator algebras,  that  the group ring
$K[G]$ is stably finite for any group $G$ and any field $K$ of characteristic $0$. Kaplansky also  asked whether this property remains true for fields of positive characteristic. 
The claim that $K[G]$ is stably finite for any group $G$ and any field $K$
is  known as ``Kaplansky's stable finiteness conjecture".
Elek and Szab{\'o}~\cite[Corollary~4.7]{es-direct} proved that every sofic group satisfies Kaplansky's stable finiteness conjecture
(see also~\cite[Corollary~1.4]{cc-israel},  \cite[Corollary~7.7]{g-p-linear-sofic}, and \cite[Corollary~1.9]{bradford2022hopfian}).   
\par
In~\cite[Theorem~B]{phung-pams-2023}, the third author of the present paper obtained the following result as a consequence of a general direct finiteness property of algebraic cellular automata.

\begin{theorem}
\label{t:main}
Let $G$ be a surjunctive group and let $K$ be a field. Then the group ring $K[G]$ is stably finite.
\end{theorem}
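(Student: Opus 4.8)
The plan is to deduce stable finiteness of \(K[G]\) from surjunctivity of \(G\) in two stages. First I would prove the result when \(K\) is a \emph{finite} field, where surjunctivity applies directly because the relevant cellular-automata alphabet is then finite; then I would use a first-order transfer principle to pass from finite fields to an arbitrary field \(K\). The whole point of the model-theoretic approach is that, with \(G\) and \(d\) fixed, stable finiteness of matrices over \(K[G]\) with prescribed support is expressible by a single \emph{universal} first-order sentence in the language of rings, and such sentences obey strong transfer laws between characteristics.

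I would first set up the encoding. Fix an integer \(d \ge 1\) and a finite subset \(S \subseteq G\) with \(1_G \in S\). Any matrix \(M \in \Mat_d(K[G])\) all of whose entries are supported in \(S\) is determined by the finite family of coefficients \((M_{ij}(s))_{1 \le i,j \le d,\ s \in S}\) in \(K\). Writing out the entries of \(MN\) and \(NM\) shows that each coefficient of these products, indexed by an element of the finite set \(S \cdot S\), is a fixed polynomial with integer coefficients in the coordinates of \(M\) and \(N\); here the combinatorics of which pairs \((s,t) \in S \times S\) satisfy \(st = g\) is fixed finite data attached to \(G\). Consequently the implication ``\(MN = \Id_d \Rightarrow NM = \Id_d\)'', quantified over all such \(M, N\), is a first-order sentence \(\varphi_{G,d,S}\) of the shape \(\forall \underline{x}\,(P(\underline{x}) = 0 \to Q(\underline{x}) = 0)\) with \(P, Q\) systems of polynomials over \(\Z\); in particular \(\varphi_{G,d,S}\) is \emph{universal}. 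Since every matrix over \(K[G]\) has entries supported in some such \(S\), stable finiteness of \(K[G]\) is equivalent to the assertion that \(\varphi_{G,d,S}\) holds in \(K\) for all \(d\) and all \(S\).

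For the base case, let \(K = \F_q\) be a finite field and set \(A = K^d\), a \emph{finite} set. There is an isomorphism between \(\Mat_d(K[G])\) and the algebra \(\LCA(G;A)\) of linear cellular automata over the alphabet \(A\) (up to passing to the opposite algebra), under which matrix multiplication corresponds to composition. Given \(M, N\) with \(MN = \Id_d\), the associated automaton \(\tau_N\) admits a one-sided inverse and is, in particular, injective. Because \(A\) is finite and \(G\) is surjunctive, \(\tau_N\) is surjective, hence bijective; its set-theoretic inverse is again a linear cellular automaton, since the inverse of a continuous \(G\)-equivariant bijection of \(A^G\) is continuous and \(G\)-equivariant by compactness and the Curtis--Hedlund theorem, and linearity is preserved. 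Thus \(\tau_N\) is invertible in \(\LCA(G;A)\), its left inverse \(\tau_M\) must coincide with \(\tau_N^{-1}\), and transporting \(\tau_N \tau_M = \mathrm{id}\) back through the isomorphism yields \(NM = \Id_d\). Hence \(\varphi_{G,d,S}\) holds in every finite field. This is the step I expect to be most delicate: one must pin down the isomorphism \(\Mat_d(K[G]) \cong \LCA(G;A)\) and verify that the inverse of a bijective linear cellular automaton is itself such an automaton, so that surjunctivity — a statement about mere bijectivity of equivariant maps — can be promoted to two-sided invertibility of matrices.

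It remains to transfer. Because \(\varphi_{G,d,S}\) is universal, it is inherited by substructures and by unions of ascending chains. Fix a prime \(p\). Since \(\overline{\F_p} = \bigcup_{n} \F_{p^n}\) and each \(\F_{p^n}\) satisfies \(\varphi_{G,d,S}\), the sentence holds in \(\overline{\F_p}\); completeness of the theory of algebraically closed fields of characteristic \(p\) then gives \(\varphi_{G,d,S}\) in \emph{every} such field, and universality propagates it to every field of characteristic \(p\), as a subfield of its algebraic closure. For characteristic \(0\), the {\L}o\'{s}--Vaught transfer principle — a first-order sentence holds in all algebraically closed fields of characteristic \(0\) iff it holds in all algebraically closed fields of sufficiently large characteristic — applies, since \(\varphi_{G,d,S}\) holds in \emph{all} positive characteristics; universality again covers every field of characteristic \(0\). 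Thus \(\varphi_{G,d,S}\) holds in every field \(K\), for all \(d\) and \(S\), which is precisely stable finiteness of \(K[G]\). This part is essentially bookkeeping; the only substantive point is that stable finiteness genuinely decomposes into the universal sentences \(\varphi_{G,d,S}\), whose universality is exactly what legitimizes both the descent to subfields and the Lefschetz-type transfer to characteristic \(0\).
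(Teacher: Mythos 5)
Your proposal is correct and takes essentially the same route as the paper: the base case of finite fields is handled by surjunctivity via the correspondence between $\Mat_d(K[G])$ and linear cellular automata on $(K^d)^G$, and the first-order encoding of stable finiteness for matrices with support in a fixed finite set $S$ is then transferred through algebraically closed fields of positive and zero characteristic (completeness of $\mathrm{ACF}_p$ plus the characteristic-$0$/large-characteristic principle) down to arbitrary fields. The only differences are cosmetic: you encode stable finiteness as a universal sentence and invoke preservation under substructures and directed unions, while the paper dually encodes its \emph{failure} as an existential sentence and argues via elementary equivalence, an explicit finite-subfield argument for $\overline{\F_p}$, and the fact that subrings of stably finite rings are stably finite.
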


Observe that Theorem~\ref{t:main}, combined with the theorem of Gromov and Weiss on the surjunctivity of sofic groups, yields the theorem of Elek and Szab{\'o on the stable finiteness of group rings of sofic groups with coefficients in arbitrary fields.
\par
Several alternative proofs of Theorem~\ref{t:main} are known (see Remark~\ref{r:other-proofs} below).
The goal of the present  note   is to provide a   proof 
relying   on  model theory. Our proof does not make use of Kaplansky's result on the stable finiteness of group rings in  characteristic $0$.

\par

\noindent
{\bf Acknowledgements.} 
We are very grateful to Goulnara Arzhantseva, Henry Bradford, Vladimir Pestov, Benjamin Steinberg,  and Andreas Thom for
their interest in our work and comments. We would also like to thank  
Anand Pillay for helpful discussions concerning model theory.

\section{Background material}
\label{background}

\subsection{Gottschalk's conjecture and cellular automata} (See~\cite[Chapter~1]{ca-and-groups-springer})
Let $G$ be a group and let $A$ be a set.
Consider the set $A^G$ consisting of all maps $x \colon G \to A$.
A \emph{cellular automaton} over the group $G$ and the \emph{alphabet} $A$
is a map $\tau \colon A^G \to A^G$ satisfying the following property:
there exist a finite subset $S \subset G$
and a map $\mu \colon A^S \to A$ such that
\begin{equation}
\label{def:automate}
 (\tau(x))(g) = \mu((x \circ L_g)\vert_S)
\end{equation}
for all $x \in A^G$ and $g \in G$, where $L_g \colon G \to G$ denotes the left multiplication by $g$ and $(x \circ L_g)\vert_S$ denotes the restriction of  $x \circ L_g$ to $S$.
\par
The \emph{prodiscrete uniform structure} on $A^G$ is the product uniform structure on $A^G$ obtained by taking the uniform discrete structure on every factor $A$ of $A^G = \prod_{g \in G} A$.
The \emph{prodiscrete topology} on $A^G$ is the topology associated with the prodiscrete uniform structure on $A^G$.
The prodiscrete topology is also the product topology obtained by taking the discrete topology on every factor of $A^G$.
Equip $A^G$ with the left action of $G$ defined by $(g,x) \mapsto x \circ L_g$ for all $g \in G$ and $x \in A^G$.
A map $\tau \colon A^G \to A^G$ is a cellular automaton if and only if it is $G$-equivariant and uniformly continuous with respect to the prodiscrete uniform structure on $A^G$. 
In the case when $A$ is a finite set, a map $\tau \colon A^G \to A^G$ is a cellular automaton if and only if it is $G$-equivariant and continuous
with respect to the prodiscrete topology on $A^G$.
Thus, Gottschalk's conjecture amounts to saying that, for any group $G$ and any finite set $A$, every  injective cellular automaton $\tau \colon A^G \to A^G$ is surjective. 

\subsection{Stably finite rings} 
(See~\cite[Section~1.B]{lam-modules-rings})
All rings are assumed to be associative and unital.
The zero element of a ring is denoted by $0$ and its unital element  is denoted by $1$.
\par
A ring $R$ is called \emph{directly finite} if $a b = 1$ implies $b a = 1$ for all $a,b\in R$.
If $K$ is  a field and $V$ is a  vector space over $K$, 
then the endomorphism ring of $V$ is  directly finite if and only if $V$ is finite-dimensional.
A ring $R$ is called \emph{stably finite} if the ring $\Mat_d(R)$ of $d \times d$ matrices with entries in $R$ is stably finite for every integer $d \geq 1$.
Every stably finite ring $R$ is directly finite since the ring $\Mat_1(R)$ is isomorphic to  $R$.
All finite rings, all commutative rings, all fields, all division rings,
all one-sided Noetherian rings,  and all unit-regular rings  
are stably finite (and therefore directly finite).
There exist directly finite rings that are not stably finite (see for instance~\cite[Exercise~1.18]{lam-emr}).

\subsection{Group rings} (See~\cite{passman-group-rings})
Let $G$ be a group and let $K$ be a field.
The set $K^G$, which consists of all maps $\alpha \colon G \to K$, has a natural structure of a vector space over $K$.
The \emph{support} of $\alpha \in K^G$ is the subset of $G$ consisting of all $g \in G$ such that $\alpha(g) \not= 0$. 
Let  $K[G]$ denote the vector subspace of $K^G$ consisting of all   $\alpha \in K[G]$ having finite support.
The \emph{convolution product} $\alpha \beta$  of two elements $\alpha, \beta \in K[G]$ is defined by
\begin{equation*}\label{e;convolutional-prod}
(\alpha\beta)(g) \coloneqq  \sum_{\substack{h_1, h_2 \in G\\ h_1 h_2 = g}}\alpha(h_1)\beta(h_2)
\end{equation*}
for all $g \in G$.
Equipped with the convolution product, the vector space $K[G]$ is a $K$-algebra.
For $g \in G$, consider the element $\delta_g \in K[G]$ defined by $\delta_g(g) = 1$ and $\delta_g(h) = 0$ for all $h \in G \setminus \{g\}$.
Then $\delta_{1_G} = 1 \in K[G]$.
As $\delta_{g g'} = \delta_g \delta_{g'}$ for all $g,g' \in G$, we deduce that the map $g \mapsto \delta_g$ defines a group embedding of $G$ into the group of units of $K[G]$.
We have $\alpha = \sum_{g \in G} \alpha(g) \delta_g$ for all $\alpha \in K[G]$, so that the family $(\delta_g)_{g \in G}$ is a vector basis for $K[G]$.
\par
The \emph{group ring} of $G$ with coefficients in $K$ is the ring underlying the $K$-algebra $K[G]$.

 \subsection{Linear cellular automata and stable finiteness of group rings} (See~\cite{cc-israel}, \cite[Chapter~8]{ca-and-groups-springer}, )
Let $K$ be a field and let $V$ be a vector space over $K$.
The  set $V^G$ has a natural product structure of a vector space over $K$.
A cellular automaton $\tau \colon V^G \to V^G$ is called a \emph{linear cellular automaton} if $\tau$ is a $K$-linear map.
The stable finiteness of group rings admits the following interpretation in terms of linear cellular automata.

\begin{theorem}
\label{t:st-lca}
Let $G$ be a group and let $K$ be a field.
Then the ring $K[G]$ is stably finite if and only if, for any finite-dimensional vector space $V$ over $K$,
every injective linear cellular automaton $\tau \colon V^G \to V^G$ is surjective.
\end{theorem}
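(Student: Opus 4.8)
The plan is to fix an integer $d\geq 1$, write $V\cong K^d$, and exploit the standard isomorphism of $K$-algebras $\LCA(G,V)\cong\Mat_d(K[G])=:R$ furnished by the local-rule description of a linear cellular automaton (see~\cite[Chapter~8]{ca-and-groups-springer} and~\cite{cc-israel}). Under this isomorphism each $\tau\in\LCA(G,V)$ is the operator $\tau_M$ attached to a matrix $M\in R$, and composition of automata corresponds to matrix multiplication. Since $K[G]$ is stably finite exactly when $R=\Mat_d(K[G])$ is directly finite for every $d$, and since every finite-dimensional $V$ is some $K^d$, it suffices to prove, for each fixed $d$, the equivalence: $R$ is directly finite if and only if every injective $\tau_M$ is surjective. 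Thus the whole problem reduces to translating injectivity and surjectivity of $\tau_M$ into ring-theoretic properties of $M$.

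The key is a duality dictionary. I would realize $V^G=(K^d)^G$ as the full algebraic dual of the free right $K[G]$-module $U:=K[G]^{d}$ of row vectors, for a natural bilinear pairing under which $\tau_M$ coincides with the transpose $\rho_M^{\ast}$ of the right-multiplication map $\rho_M\colon U\to U$, $u\mapsto uM$. Verifying this identification from the explicit convolution formula $(\tau x)(g)=\sum_{s}M_s\,x(gs)$ is the one computation I would carry out in detail; everything downstream is then formal. Indeed, for a $K$-linear endomorphism $T$ of any vector space one has the elementary facts (for the full linear dual) that $T^{\ast}$ is injective if and only if $T$ is surjective, and $T^{\ast}$ is surjective if and only if $T$ is injective. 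Applying these with $T=\rho_M$ yields: $\tau_M$ is injective if and only if $\rho_M$ is surjective, i.e.\ $M$ has a left inverse $NM=\Id$ in $R$; and $\tau_M$ is surjective if and only if $\rho_M$ is injective, i.e.\ $M$ is not a right zero-divisor, meaning $NM=0$ implies $N=0$.

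It remains to feed this dictionary into a short ring lemma: $R$ is directly finite if and only if every left-invertible element is a non-right-zero-divisor. One implication is immediate from the definition; for the converse, given $NM=\Id$ with $M$ a non-right-zero-divisor, the identity $(MN-\Id)M=M(NM)-M=0$ forces $MN=\Id$, so $M$ is two-sided invertible and $R$ is directly finite. Combining the lemma with the dictionary gives, for each fixed $d$: every injective $\tau_M$ is surjective if and only if every left-invertible $M\in R$ is a non-right-zero-divisor, if and only if $R$ is directly finite. Quantifying over all $d$ establishes both directions of the theorem.

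The main obstacle is the substantive half of the injectivity clause, namely that an injective linear cellular automaton with finite-dimensional alphabet possesses a left inverse lying again in $\LCA(G,V)$. This is precisely where finiteness of $\dim V$ enters, through the isomorphism $\LCA(G,V)\cong\Mat_d(K[G])$, and where the identification of $V^G$ with a dual module does the real work; without the duality this direction is not at all obvious, as the case $G=\Z$, $V=K$ already shows, where the injective automata turn out to be exactly the monomials $a\,t^{k}$ in $K[t,t^{-1}]$.
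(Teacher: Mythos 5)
Your proposal is correct, and there is in fact nothing in the paper to compare it against line by line: the paper's entire proof of Theorem~\ref{t:st-lca} is the citation to \cite[Corollary~8.15.6]{ca-and-groups-springer}. What you have written is therefore a genuine, essentially self-contained proof of the cited result, and its hinges all check out. The pairing $\langle u,x\rangle=\sum_{i}\sum_{g}\alpha_i(g)x_i(g)$ between the row module $U=K[G]^d$ and $(K^d)^G$ really does identify $V^G$ with the \emph{full} linear dual of $U$ (a functional is freely determined by its values on the basis $e_i\delta_g$), and the adjunction $\langle uM,x\rangle=\langle u,\tau_M(x)\rangle$ is a correct finite computation from $(\tau_M x)(g)=\sum_s M_s x(gs)$; the two dual-map facts you invoke hold for arbitrary, possibly infinite-dimensional, spaces (they only need complements of subspaces, i.e., choice); surjectivity of $\rho_M$ is equivalent to the existence of $N$ with $NM=\Id$ and injectivity of $\rho_M$ to ``$NM=0\Rightarrow N=0$'' because matrices act row by row; and your ring lemma, via $(MN-\Id)M=0$, is exactly right, as is the closing example for $G=\Z$, $V=K$, where the injective automata correspond to the units $at^k$ of $K[t,t^{-1}]$. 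The one ingredient you import, the isomorphism $\LCA(G,V)\cong\Mat_d(K[G])$, deserves two explicit remarks in a full write-up: first, that the local rule of a $K$-linear cellular automaton is automatically $K$-linear (restrict $\tau$ to configurations supported in $S$ and evaluate at $1_G$), which is what puts every linear cellular automaton in the form $\tau_M$; second, the composition law $\tau_M\circ\tau_N=\tau_{MN}$ (an anti-isomorphism would serve equally well, since direct finiteness is inherited by opposite rings, so the row/column convention is harmless). Compared with the proof behind the citation, which is embedded in the book's broader machinery for linear cellular automata, your duality dictionary has the merit of being purely algebraic --- no uniform structures or topological properties of $V^G$ enter --- and of handling both implications of the equivalence through a single translation.
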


\begin{proof}
See \cite[Corollary~8.15.6]{ca-and-groups-springer}.
\end{proof}

\subsection{Model theory of algebraically closed fields} (See~\cite{marker-model-theory-gtm},  \cite[Section~5]{gromov-esav}) 
Two fields are called \emph{elementary equivalent} if they satisfy the same first-order sentences (i.e., first-order formulae without free variables in the language of rings).
Isomorphic fields are always elementary equivalent but the converse does not hold in general.
For example, the algebraic closure $\overline{\Q}$ of $\Q$ and the field $\C$ of complex numbers are not isomorphic since $\overline{\Q}$ is countable while $\C$ is uncountable.
However, the fields $\overline{\Q}$ and $\C$ are elementary equivalent
by the \emph{Lefschetz principle} whose general formulation is as follows:

\begin{theorem}[Lefschetz principle]
\label{t:el-equiv-fields}
Any two algebraically closed fields of the same characteristic are elementary equivalent.
\end{theorem}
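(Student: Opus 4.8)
The Lefschetz principle is a foundational result in the model theory of algebraically closed fields, and the standard route to it passes through the completeness of the theory $\mathrm{ACF}_p$ of algebraically closed fields of a fixed characteristic $p$ (where $p$ is either zero or a prime). So the plan is to reformulate the statement: two structures satisfy the same first-order sentences precisely when the theory they both model is complete, meaning that for every sentence $\varphi$, either $\varphi$ or its negation is a logical consequence of the theory. Thus it suffices to prove that $\mathrm{ACF}_p$ is complete for each fixed characteristic $p$.

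First I would write down the axiomatization of $\mathrm{ACF}_p$ in the language of rings: the field axioms, the characteristic axioms (either $1+1+\cdots+1 \neq 0$ for every positive number of summands in the case $p=0$, or the single sentence fixing the characteristic when $p$ is prime), together with the scheme of sentences asserting that every monic polynomial of each degree $n \geq 1$ has a root. The key tool is the \L o\'s--Vaught test: a satisfiable theory with no finite models that is $\kappa$-categorical for some infinite cardinal $\kappa$ at least the cardinality of the language is complete. Since the language of rings is countable, it suffices to exhibit one uncountable cardinal in which $\mathrm{ACF}_p$ is categorical.

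The central step, and the one carrying all the real content, is the categoricity argument: I would show that any two algebraically closed fields of the same characteristic and the same uncountable cardinality $\kappa$ are isomorphic. This rests on Steinitz's theorem on the structure of algebraically closed fields, namely that such a field is determined up to isomorphism by its characteristic together with the cardinality of a transcendence basis over its prime field. For uncountable $\kappa$, the cardinality of the transcendence basis equals $\kappa$ itself, since an algebraically closed field has the same cardinality as any infinite transcendence basis over its prime field; hence two such fields have transcendence bases of equal cardinality, and any bijection between the bases extends first to an isomorphism of the purely transcendental subfields and then, by the uniqueness of algebraic closures, to an isomorphism of the two fields. This uniqueness-of-algebraic-closure extension is the main obstacle: it requires invoking the fact that an isomorphism between two fields lifts to an isomorphism between their algebraic closures, which is itself a nontrivial consequence of the transfinite back-and-forth construction underlying Steinitz's theory.

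Having established $\kappa$-categoricity and noting that $\mathrm{ACF}_p$ has only infinite models (every algebraically closed field is infinite) and is satisfiable (the algebraic closure of the prime field of characteristic $p$ is a model), the \L o\'s--Vaught test yields completeness of $\mathrm{ACF}_p$. Completeness then gives the theorem directly: if $F_1$ and $F_2$ are two algebraically closed fields of the same characteristic $p$, both are models of the complete theory $\mathrm{ACF}_p$, so for any sentence $\varphi$ they agree on whether $\varphi$ holds, which is exactly the assertion that they are elementary equivalent.
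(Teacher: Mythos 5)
Your proposal is correct and follows exactly the route the paper takes: the paper's proof is a citation to Proposition~2.2.5 and Theorem~2.2.6 of Marker's book, which are precisely the \L o\'s--Vaught test and the uncountable categoricity of $\mathrm{ACF}_p$ via Steinitz's theorem that you spell out. You have simply written out in full the standard argument the paper delegates to the reference.
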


\begin{proof}
This is a classical result in model theory which can be rephrased by saying that the theory of algebraically closed fields of a fixed characteristic is complete 
(see  Proposition~2.2.5 and Theorem~2.2.6 in~\cite{marker-model-theory-gtm}). 
\end{proof}

The Lefschetz principle is one of the key ingredients in our proof.
We shall also make use of the following:

\begin{theorem}
\label{t:char-p-char-0}
Let $\psi$ be a first-order sentence in the language of rings which is satisfied by some (and therefore any) algebraically closed field of characteristic $0$.
Then there exists an integer $N$ such that $\psi$ is satisfied by any 
algebraically closed field of  characteristic $p \geq N$.
\end{theorem}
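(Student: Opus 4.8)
The plan is to derive the statement from the compactness theorem of first-order logic together with the standard first-order axiomatization of the theory of algebraically closed fields. Recall that this theory, which I denote by $\mathrm{ACF}$, is axiomatized in the language of rings by the field axioms together with, for each integer $n \geq 1$, a sentence $\sigma_n$ asserting that every monic polynomial of degree $n$ has a root. For each prime $p$, let $\chi_p$ denote the sentence $\underbrace{1 + \cdots + 1}_{p} = 0$; a field satisfies $\chi_p$ exactly when its characteristic equals $p$. Thus the algebraically closed fields of characteristic $0$ are precisely the models of $\mathrm{ACF} \cup \{\neg \chi_p : p \text{ prime}\}$.

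First I would argue by contradiction. Suppose the conclusion fails, so that for every integer $N$ there is a prime $p \geq N$ admitting an algebraically closed field of characteristic $p$ in which $\psi$ fails, i.e.\ which satisfies $\neg \psi$. Consider the set of sentences
\[
T := \mathrm{ACF} \cup \{\neg \psi\} \cup \{\neg \chi_p : p \text{ prime}\}.
\]
Any model of $T$ is an algebraically closed field of characteristic $0$ satisfying $\neg\psi$, which would contradict the hypothesis that $\psi$ holds in some (hence, by the Lefschetz principle, Theorem~\ref{t:el-equiv-fields}, in every) algebraically closed field of characteristic $0$. So it suffices to show that $T$ is consistent.

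By the compactness theorem it is enough to check that every finite subset $T_0 \subseteq T$ is consistent. Such a $T_0$ mentions only finitely many of the sentences $\chi_p$, say those with $p \in \{p_1, \dots, p_k\}$. By the contradiction hypothesis there is a prime $p$ larger than all of $p_1, \dots, p_k$ and an algebraically closed field $F$ of characteristic $p$ with $F \models \neg\psi$. Then $F$ satisfies $\mathrm{ACF}$ and $\neg\psi$, and since $p \neq p_i$ for each $i$ we also have $F \models \neg \chi_{p_i}$; hence $F \models T_0$. Thus every finite subset of $T$ has a model, so $T$ itself has a model by compactness, giving the desired contradiction.

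The only delicate point is the very reason the theorem is not trivial: there is no single first-order sentence expressing ``characteristic $0$'', so this property must be captured by the infinite axiom scheme $\{\neg \chi_p\}$. The role of compactness is precisely to trade this infinite scheme for a finite portion of it, which then forces the conclusion to hold for all sufficiently large primes. Equivalently, one could phrase the argument syntactically: since $\psi$ holds in every model of $\mathrm{ACF} \cup \{\neg\chi_p : p \text{ prime}\}$, G\"odel's completeness theorem yields a derivation of $\psi$ using only finitely many of the axioms $\neg \chi_{p_1}, \dots, \neg\chi_{p_k}$, whence $N = 1 + \max_i p_i$ works directly.
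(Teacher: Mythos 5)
Your proof is correct. The paper gives no argument of its own for this theorem—it simply cites the implication (iii) $\implies$ (v) of Corollary~2.2.10 in Marker's book—and your compactness argument (equivalently, the finite-use-of-axioms argument via completeness) is precisely the standard proof of that cited result, so you have in effect supplied the details that the paper outsources to the reference.
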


\begin{proof}
This is (iii) $\implies$ (v) in \cite[Corollary~2.2.10]{marker-model-theory-gtm}.
\end{proof}

\section{Proof of Theorem~\ref{t:main}} 

Let us first establish some auxilliary results.

\begin{lemma}
\label{l:non-sf-first-order}
Let $G$ be a group, $d \geq 1$ an integer, and $S \subset G$ a finite subset.
Then there exists a first-order sentence $\psi_{d,S}$ in the language of rings such that a field $K$ satisfies $\psi_{d,S}$ if and only if 
there exist two matrices $A,B \in \Mat_d(K[G])$ such that
\begin{enumerate}[\rm (1)]
\item
the support of each entry of $A$ and $B$ is contained in $S$;
\item
$A B = 1$ and $B A \not= 1$.
\end{enumerate} 
\end{lemma}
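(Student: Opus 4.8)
The plan is to encode matrices $A,B \in \Mat_d(K[G])$ with entries supported in $S$ by their scalar coefficients, and then to translate the two conditions $AB = 1$ and $BA \neq 1$ into a finite system of polynomial equations and inequations over $K$, of which $\psi_{d,S}$ will be the existential closure.

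First I would observe that an element $\alpha \in K[G]$ whose support is contained in $S$ is completely determined by the family $(\alpha(s))_{s \in S}$ of its values on $S$, which are scalars in $K$. Consequently, a matrix $A \in \Mat_d(K[G])$ all of whose entries have support in $S$ is encoded by the $d^2|S|$ scalars $a_{ij}^s \coloneqq A_{ij}(s) \in K$, for $1 \le i,j \le d$ and $s \in S$; similarly $B$ is encoded by scalars $b_{ij}^s$. I would introduce one first-order variable for each of these $2d^2|S|$ scalars, and quantify over all of them existentially.

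Next I would note that, because $A$ and $B$ are supported in $S$, the product $AB$ is supported in the finite set $S \cdot S = \{s_1 s_2 : s_1, s_2 \in S\}$, and that for every $g \in G$ the coefficient
\[
(AB)_{ij}(g) = \sum_{k=1}^{d} \sum_{\substack{s_1, s_2 \in S \\ s_1 s_2 = g}} a_{ik}^{s_1} b_{kj}^{s_2}
\]
is a fixed polynomial (indeed bilinear) in the encoding variables, built from $+$ and $\cdot$ alone; the analogous formula holds for $BA$. Thus each coefficient of $AB$ and of $BA$ is a term in the language of rings.

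Finally I would assemble the sentence. The equality $AB = 1$ holds precisely when $(AB)_{ii}(1_G) = 1$ for each $i$ and $(AB)_{ij}(g) = 0$ for every remaining pair $(i,j)$ and every $g$ in the finite set $(S \cdot S) \cup \{1_G\}$; this is a finite conjunction of polynomial equations in the encoding variables. The condition $BA \neq 1$ is the negation of the corresponding conjunction for $BA$, hence a finite disjunction of polynomial inequations. Taking $\psi_{d,S}$ to be the existential closure, over all $2d^2|S|$ encoding variables, of the conjunction of these two formulas then yields a first-order sentence in the language of rings with exactly the stated property. The only point that demands care is the bookkeeping of supports: one must check that restricting attention to coefficients indexed by $(S \cdot S) \cup \{1_G\}$ loses no information, since every other coefficient of $AB$ and $BA$ vanishes identically, and that the degenerate case $1_G \notin S \cdot S$ is handled correctly — there the diagonal equation $(AB)_{ii}(1_G) = 1$ reads $0 = 1$, which correctly renders $\psi_{d,S}$ unsatisfiable. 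This is where the argument is fiddly rather than deep: the entire content of the lemma is the faithful translation of the matrix identities into first-order logic over $K$.
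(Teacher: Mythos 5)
Your proposal is correct and follows essentially the same route as the paper's proof: both encode the matrix entries by their $2d^2|S|$ scalar coefficients, express the coefficients of $AB$ and $BA$ as bilinear terms indexed by $g \in S\cdot S$, and take $\psi_{d,S}$ to be the existential closure of the conjunction asserting $AB=1$ and the negation of the conjunction asserting $BA=1$. Your explicit attention to the degenerate case $1_G \notin S\cdot S$ is a nice touch that the paper handles only implicitly (there the empty sum makes the diagonal equation read $0=1$, as you note).
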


\begin{proof}
Since $d$ and $S$ are fixed, we can quantify over $d\times d$ matrices in $\Mat_d(k[G])$ whose support of each entry is contained in $S$ by quantifying over the coefficients of every entry of the matrix. Consequently, the existence of two matrices $A,B \in \Mat_d(K[G])$ satisfying (1) and  (2) can be expressed by a $2d^2|S|$-variables first-order sentence $\psi_{d,S}$ in the language of rings, depending
only on the group multiplication table of the elements in $S$. 
\par 
For the sake of completeness, we give below an explicit  formula for $\psi_{d,S}$. 
We represent the entries at the position $(i,j)$ of the matrices $A$ and $B$  by $\displaystyle \sum_{s \in S} x_{i, j,s} s$ and $\displaystyle \sum_{s \in S} 
y_{i, j,s} s$ respectively. For $ 1 \leq i,j \leq d$ and $g \in S^2 = \{st: s, t \in S\} \subset G$, let 
\[
P(i,j,g)\coloneqq \sum_{k = 1}^d \sum_{\substack{s,t \,\in S\\ st= g}} x_{i,k,s} y_{k,j,t} 
\quad \text{and} \quad  
Q(i,j,g) \coloneqq \sum_{k = 1}^d \sum_{\substack{s,t \,\in S\\ st = g}}y_{i,k,s} x_{k,j,t}. 
\] 
Let $D \coloneqq \{(i,i,1_G) : 1 \leq i \leq d\}$. 
Then the properties $AB=1$ and $BA=1$ can be respectively expressed by the  first-order formulae 
$P$ and $Q$, where 
\[
P= \left(\bigwedge\limits_{\substack{ (i,i,1_G) \in D}} P(i,i,1_G)=1 \right)
\land   \left(\bigwedge\limits_{\substack{g \in S^2, (i,j,g) \notin D}} P(i,j,g)=0\right) 
\]
\[
Q= \left(\bigwedge\limits_{\substack{ (i,i,1_G) \in D}} Q(i,i,1_G)=1 \right)
\land   \left(\bigwedge\limits_{\substack{g \in S^2, (i,j,g) \notin D}} Q(i,j,g)=0\right). 
\] 
Hence, we can take $ \psi_{d,S} \coloneqq  \exists\, x_{i,j,s}, y_{i,j,s} \,(1 \leq i,j \leq d, \, s \in S), P \land \neg Q$. 
\end{proof}

 \begin{lemma}
\label{l:sf-KG-elem-equiv}
Let $G$ be a group and   suppose that $K$ and $L$ are elementary equivalent fields.
Then $K[G]$ is stably finite if and only if $L[G]$ is stably finite.
\end{lemma}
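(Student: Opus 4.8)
The plan is to reduce stable finiteness to a family of first-order conditions and then invoke elementary equivalence. First I would unwind the definition: the ring $K[G]$ fails to be stably finite precisely when there exist an integer $d \geq 1$ and matrices $A, B \in \Mat_d(K[G])$ with $AB = 1$ but $BA \neq 1$. The crucial observation is that each entry of $A$ and $B$ lies in $K[G]$ and hence has finite support; taking the union of these finitely many supports yields a single finite subset $S \subset G$ containing the support of every entry of $A$ and $B$. Consequently, $K[G]$ fails to be stably finite if and only if there exist an integer $d \geq 1$ and a finite subset $S \subset G$ such that $K$ satisfies the sentence $\psi_{d,S}$ furnished by Lemma~\ref{l:non-sf-first-order}.

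With this reformulation in hand, the symmetry of the argument does the rest. Since $K$ and $L$ are elementary equivalent, they satisfy exactly the same first-order sentences in the language of rings; in particular, for every $d \geq 1$ and every finite $S \subset G$, the field $K$ satisfies $\psi_{d,S}$ if and only if $L$ does. Quantifying over all such pairs $(d,S)$ at the meta-level, I conclude that there exist $d$ and $S$ with $K \models \psi_{d,S}$ if and only if there exist $d$ and $S$ with $L \models \psi_{d,S}$. By the previous paragraph this says exactly that $K[G]$ fails to be stably finite if and only if $L[G]$ fails to be stably finite, which is logically equivalent to the assertion of the lemma.

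The one point that requires care---which I would flag as the main subtlety rather than a genuine obstacle---is that stable finiteness is not itself expressible by a single first-order sentence: it is an infinite conjunction, over all $d$ and all finite $S$, of the negations $\neg \psi_{d,S}$. Elementary equivalence only guarantees agreement on individual sentences, not on infinite conjunctions. The argument circumvents this because the quantification over $(d,S)$ is performed outside the logic, at the level of the metatheory: for each fixed pair we transfer a single first-order statement across the elementary equivalence, and only afterwards do we take the disjunction over all pairs. No compactness or uniformity in the choice of $(d,S)$ is needed, so the proof goes through directly.
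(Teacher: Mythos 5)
Your proof is correct and follows essentially the same route as the paper: reduce failure of stable finiteness to the satisfaction of some sentence $\psi_{d,S}$ from Lemma~\ref{l:non-sf-first-order} and transfer each such sentence across the elementary equivalence, with the quantification over $(d,S)$ handled in the metatheory. The paper merely phrases it asymmetrically (one implication plus symmetry of $K$ and $L$) rather than as your explicit biconditional, and your remark on why the infinitary nature of stable finiteness is harmless is a correct articulation of what the paper leaves implicit.
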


\begin{proof}
As $K$ and $L$ play symmetric roles, it suffices to show that if $K[G]$ is not stably finite then $L[G]$ is not stably finite.
So, let us assume that $K[G]$ is not stably finite.
This means that there exist an integer $d \geq 1$ and two square matrices $A$ and $B$ of order $d$ with entries in $K[G]$ such that
$A B = 1$ and $B A \not= 1$.
If  $S \subset G$ is a finite subset containing the support of each entry of $A$ and $B$,
the field $K$ satisfies the sentence $\psi_{d,S}$ given by Lemma~\ref{l:non-sf-first-order}.
Since $K$ and $L$ are elementary equivalent by our hypothesis,
The  sentence $\psi_{d,S}$ is also satisfied by the field $L$.
Consequently, the group ring $L[G]$ is not stably finite either.
\end{proof}

The following result is Theorem~B in~\cite{phung-weakly-surjunctive}. 

\begin{lemma}
\label{l:k-finite-is enough}
Let $G$ be a group and suppose that the group ring $K[G]$ is stably finite for every finite field $K$.
Then the group ring $K[G]$ is stably finite for any field $K$.
\end{lemma}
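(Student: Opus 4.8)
The plan is to argue by contraposition: assuming that $K[G]$ fails to be stably finite for some field $K$, I would exhibit a \emph{finite} field $F$ for which $F[G]$ is not stably finite, contradicting the hypothesis. First, by Lemma~\ref{l:non-sf-first-order}, the failure of stable finiteness for $K[G]$ is witnessed by a pair $A,B\in\Mat_d(K[G])$ with $AB=1$ and $BA\neq 1$ whose entries are supported on some finite $S\subset G$, so that $K$ satisfies the sentence $\psi_{d,S}$. The crucial structural observation is that $\psi_{d,S}$, as written out explicitly in the proof of Lemma~\ref{l:non-sf-first-order}, is an \emph{existential} sentence: a single block of existential quantifiers in front of the quantifier-free matrix $P\land\neg Q$.

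Since existential sentences are preserved under passage to field extensions, embedding $K$ into its algebraic closure $\overline{K}$ yields $\overline{K}\models\psi_{d,S}$, and now $\overline{K}$ is algebraically closed. I would then split according to the characteristic. If $\mathrm{char}(K)=p>0$, the Lefschetz principle (Theorem~\ref{t:el-equiv-fields}) makes $\overline{K}$ elementary equivalent to $\overline{\F_p}$, whence $\overline{\F_p}\models\psi_{d,S}$. If $\mathrm{char}(K)=0$, then by Lefschetz $\psi_{d,S}$ holds in \emph{every} algebraically closed field of characteristic $0$, so Theorem~\ref{t:char-p-char-0} supplies an integer $N$ such that $\psi_{d,S}$ holds in every algebraically closed field of characteristic $p\geq N$; choosing any prime $p\geq N$ again yields $\overline{\F_p}\models\psi_{d,S}$.

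In either case some $\overline{\F_p}$ satisfies $\psi_{d,S}$. Since $\overline{\F_p}$ is the union of its finite subfields, which are directed under inclusion, and $\psi_{d,S}$ is existential, the finitely many field elements witnessing $\psi_{d,S}$ all lie in a single finite subfield $F\subseteq\overline{\F_p}$; as quantifier-free conditions are absolute between a subfield and its extension, $F\models\psi_{d,S}$. Applying the converse direction of Lemma~\ref{l:non-sf-first-order} to the finite field $F$ shows that $F[G]$ is not stably finite, which is the desired contradiction. Alternatively, one could package the passages through $\overline{K}$ and $\overline{\F_p}$ using Lemma~\ref{l:sf-KG-elem-equiv} in place of the explicit sentence, but tracking $\psi_{d,S}$ directly keeps the existential bookkeeping transparent.

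The only genuinely delicate point is the characteristic-$0$ case, where one cannot reduce to positive characteristic by elementary equivalence alone: it is precisely here that Theorem~\ref{t:char-p-char-0} is indispensable, transferring the characteristic-$0$ statement to all sufficiently large primes. The remaining care is bookkeeping about existential formulas—ensuring that $\psi_{d,S}$ survives \emph{upward} along the embedding $K\hookrightarrow\overline{K}$ and \emph{downward} through the union $\overline{\F_p}=\bigcup_n\F_{p^n}$—and I would make this explicit by appealing to the quantifier structure of $\psi_{d,S}$ exhibited in Lemma~\ref{l:non-sf-first-order} rather than to any black-box preservation theorem.
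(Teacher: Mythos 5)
Your proof is correct and follows essentially the same route as the paper's: both rest on the same three transfer steps, namely the Lefschetz principle in positive characteristic, Theorem~\ref{t:char-p-char-0} in characteristic $0$, and passage through the directed union of finite subfields of $\overline{\F_p}$. The only difference is organizational: you run the argument as a single contrapositive chain tracking the existential sentence $\psi_{d,S}$ throughout, whereas the paper splits into four cases (algebraic closure of $\F_p$, algebraically closed of characteristic $p$, algebraically closed of characteristic $0$, arbitrary field) and phrases the upward and downward transfers algebraically --- $K[G]$ as a subring of $\overline{K}[G]$, and entries of matrices $A,B$ with $AB=1$ lying in a finite subfield --- rather than via preservation of existential sentences.
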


\begin{proof}
We divide the proof into four cases.
\noindent
Case 1: $K$ is the algebraic closure of the field $F_p \coloneqq \Z/p\Z$ for some prime $p$.
For every integer $n \geq 1$, let $K_n$ denote the subfield of $K$ consisting of all roots of the polynomial $X^{p^{n!}} - X$.
In other words, denoting by $\phi \colon K \to K$ the Frobenius automorphism, $K_n$ is the subfield of $K$ consisting of all fixed points of $\phi^{n!}$. 
We have $K_n \subset K_{n + 1}$ for all $n \geq 1$ and $K = \bigcup_{n \geq 1}  K_n$.  
Moreover,  $K_n$ is a finite field (of cardinality $p^{n!}$) for every $n \geq 1$.
Let $A$ and $B$ be square matrices of order $d$ with entries in $K[G]$ such that $A B = 1$.
Then there exists $n_0 \geq 1$ such that all entries of $A$ and $B$ are in $K_{n_0}[G]$.
Since $K_{n_0}[G]$ is stably finite by our hypothesis, we deduce that  $B A = 1$.
This shows that $K[G]$ is stably finite.
\par
\noindent
Case 2: $K$ is an  algebraically closed field of  characteristic $p > 0$.
This follows from Case 1, Lemma~\ref{l:sf-KG-elem-equiv}, and Theorem~\ref{t:el-equiv-fields}.
\par
\noindent   
Case 3: $K$ is an  algebraically closed field of charactristic $0$.
Suppose by contradiction that $K[G]$ is not stably finite.
This means that $K$ satisfies the sentence $\psi_{d,S}$ given by Lemma~\ref{l:non-sf-first-order} for some integer $d \geq 1$ and some finite subset $S \subset G$.
By applying Theorem~\ref{t:char-p-char-0}, we deduce that there exists an integer $N \geq 1$ such that
$\psi_{d,S}$ is satisfied by any algebraically closed field of characteristic $p \geq N$.
This implies that $L[G]$ is not stably finite whenever $L$ is an  algebraically closed field of characteristic $p \geq N$, in contradiction with Case 1.
\par  
\noindent
Case 4: $K$ is an arbitrary field.
Let $\overline{K}$ denote the algebraic closure of $K$.
Then $\overline{K}[G]$ is stably finite by   Case 2 and Case 3.
As $K[G]$ is a subring of $\overline{K}[G]$, we deduce that $K[G]$ is itself stably finite.
\end{proof}

\begin{remark}
We could also deduce Case 3 from the theorem of Kaplansky mentioned above asserting that $K[G]$ is stably finite  for any group $G$ and any field $K$ of characteristic $0$.
\end{remark}

\begin{proof}[Proof of Theorem~\ref{t:main}]
Suppose first that the field $K$ is finite.
Let $V$ be a finite-dimensional vector space over $K$.
Then $V$ is finite (of cardinality $|V| = |K|^{\dim(V)}$).
Since $G$ is surjunctive, every injective cellular automaton $\tau \colon V^G \to V^G$ is surjective.
In particular, every injective linear cellular automaton $\tau \colon V^G \to V^G$ is surjective.
Therefore $K[G]$ is stably finite by Theorem~\ref{t:st-lca}.
\par
By applying Lemma~\ref{l:k-finite-is enough}, we conclude that $K[G]$ is stably finite for any field $K$.
\end{proof}

\section{Final remarks}

\begin{remark}
The following definition was introduced by the third author in~\cite{phung-weakly-surjunctive}.
A group $G$ is called \emph{linearly surjunctive} if for every finite dimensional vector space $A$ over a finite field $k$, all injective linear cellular automata $\tau \colon A^G \to A^G$ are  surjective.
Every surjunctive group is clearly linearly surjunctive
but there might exist linearly surjunctive groups that are not surjunctive.
Observe that the hypothesis that $G$ is linearly surjunctive is sufficient in the first part of the proof of Theorem~\ref{t:main}.   
Thus, the conclusion of Theorem~\ref{t:main} remains valid for all linearly surjunctive groups.
\end{remark}

\begin{remark}
\label{r:dykema-juschenko}
Kaplansky's \emph{direct finiteness conjecture} asserts that the ring $K[G]$ is directly finite for any group $G$ and any field $K$.
Since stable finiteness implies direct finiteness, Kaplansky's direct finiteness conjecture  is a weakening of Kaplansky's stable finiteness conjecture.
In~\cite[Theorem~2.2]{dykema-juschenko}, Dykema and Juschenko have shown that, for any field $K$ and any group $G$,
the ring $K[G]$ is stably finite if and only if 
the ring $K[G \times H]$ is directly finite for every finite group $H$.
It follows that Kaplansky's direct finiteness conjecture is in fact equivalent to
Kaplansky's stable finiteness conjecture.
Note that, as pointed out in \cite[page~12]{bradford2022hopfian}, the above equivalence was already
known to Passman (see \cite{passman-review}).
\end{remark}

\begin{remark}
\label{r:other-proofs}
In \cite[p.~10]{capraro-lupini},
using the fact that every field embeds in an ultraproduct of finite fields (an observation credited to Pestov),
the authors prove that, given a group $G$,
the ring $K[G]$ is directly finite for any field $K$ as soon as $K[G]$ is directly finite for any finite field $K$.
They then deduce that $K[G]$ is directly finite for any surjunctive group $G$ and any field $K$.
As every virtually surjunctive group is surjunctive (see \cite[Lemma 6]{arzhanseva-gal} and \cite[Exercise~3.26]{csc-exos}),
this last  result, combined with the result of Dykema and Juschenko mentioned in Remark~\ref{r:dykema-juschenko},   implies  Theorem~\ref{t:main}.
Recently, Bradford and Fournier-Facio \cite[Corollary~3.25]{bradford2022hopfian} gave another proof of
Theorem~\ref{t:main}.
Note that their proof makes use of Kaplansky's result on the stable finiteness of group rings in characteristic $0$.
Other alternative proofs for Theorem~\ref{t:main} have been  privately communicated to us by Benjamin Steinberg and Andreas Thom.  
\end{remark}

\begin{remark}
There are also three famous conjectures attributed to Kaplansky about the structure of group rings of torsion-free groups:
the unit conjecture, the zero-divisor conjecture, and the idempotents conjecture.
Kaplansky's unit (resp.~zero-divisor, resp.~idempotent) conjecture
asserts that, for every torsion-free group $G$ and any field $K$, the ring $K[G]$ has no non-trivial units (resp.~no zero-divisors, resp.~no non-trivial idempotents).
Note that if $K[G]$ has no non-trivial units then it has no zero-divisors, and that if $K[G]$ has no zero-divisors then it has no non-trivial idempotents.
Thus, 
any torsion-free group satisfying Kaplansky's unit conjecture also satisfies Kaplansky's zero-divisor conjecture
and any torsion-free group satisfying Kaplansky's zero-divisor conjecture also satisfies Kaplansky's idempotent conjecture.
On the other hand, if a ring $R$ has no non-trivial idempotents then it is directly finite (observe that if $a,b \in R$ satisfy $a b = 1$, then $b a$ is an idempotent).
Thus, any torsion-free group satisfying Kaplansky's idempotent conjecture also satisfies Kaplansky's direct finiteness conjecture.
Recently, Gardam~\cite{gardam-unit-conjecture} disproved  Kaplansky's unit conjecture by exhibiting a non-trivial unit in the group ring of the Promislow group with coefficients in $F_2$ 
(the Promislow group is the fundamental group of the unique flat $3$-dimensional closed manifold which is a real homology sphere).
  By replacing $\psi_{d,S}$ by a suitably chosen first-order sentence $\psi_S$ in the language of rings
  and adapting the proofs of the three lemmas in the present section,
  one deduces that, given a torsion-free group $G$,
  the ring $K[G]$ has no non-trivial units (resp.~no zero-divisors, resp.~no non-trivial idempotents) for any field $K$ as soon as this is true for any finite field
(see also \cite[Remark~3.14]{bradford2022hopfian}).  
     \end{remark}

\bibliographystyle{siam}
\bibliography{model}

\end{document}